\newtheorem{theorem}{Theorem}[section]
\newtheorem{question}[theorem]{Question}
\renewcommand{\PrintDOI}[1]{\href{http://dx.doi.org/\detokenize{#1}}{doi: \detokenize{#1}}%
  \IfEmptyBibField{pages}{, (to appear in print)}{}}
\def\commutatif{\ar@{}[rd]|{\circlearrowleft}}
\newcommand{\eq}[1][r]
   {\ar@<-3pt>@{-}[#1]
    \ar@<-1pt>@{}[#1]|<{}="gauche"
    \ar@<+0pt>@{}[#1]|-{}="milieu"
    \ar@<+1pt>@{}[#1]|>{}="droite"
    \ar@/^2pt/@{-}"gauche";"milieu"
    \ar@/_2pt/@{-}"milieu";"droite"}
\def\dar[#1]{\ar@<2pt>[#1]\ar@<-2pt>[#1]}
\newcommand{\bigon}[4][r]{
    \ar@/^1pc/[#1]^{#2}_*=<0.3pt>{}="HAUT"
    \ar@/_1pc/[#1]_{#3}^*=<0.3pt>{}="BAS"
    \ar@{=>} "HAUT";"BAS" ^{#4}
  }
\newcommand{\bigons}[6][r]{  
    \ar@/^2pc/[#1]^{#2}_*=<0.3pt>{}="HAUT"
    \ar@{}    [#1]     ^*=<0.3pt>{}="MILIEUHAUT"
                       _*=<0.3pt>{}="MILIEUBAS"
    \ar[#1]_(0.3){#3}
    \ar@/_2pc/[#1]_{#4}^*=<0.3pt>{}="BAS"
    \ar@{=>} "HAUT";"MILIEUHAUT" ^{#5}
    \ar@{=>} "MILIEUBAS";"BAS" ^{#6}
  }
\newcommand\rTo{\longrightarrow}
\newcommand\mto{\longmapsto}
\theoremstyle{definition}
\newtheorem{lemma}[theorem]{Lemma}
\newtheorem{corollary}[theorem]{Corollary}
\newtheorem{conjecture}[theorem]{Conjecture}
\theoremstyle{definition}
\theoremstyle{remark}
\newtheorem{remark}[theorem]{Remark}
\numberwithin{equation}{section}
\title{On the Classification of Topological Quandles}
\author{Zhiyun Cheng}
\address{School of Mathematical Sciences,
Beijing Normal University, Laboratory of Mathematics and Complex Systems, Ministry of Education, Beijing 100875, China}
\email{czy@bnu.edu.cn}
\author{Mohamed Elhamdadi}
\address{Department of Mathematics,
University of South Florida, Tampa, FL 33620, U.S.A.}
\email{emohamed@math.usf.edu}
\author{Boris Shekhtman}
\address{Department of Mathematics,
    University of South Florida, Tampa, FL 33620, U.S.A.}
\email{shekhtma@usf.edu}
\begin{document}
\maketitle
\begin{abstract}
We investigate the classification of topological quandles on some simple manifolds.  Precisely we classify all Alexander quandle structures, up to isomorphism, on the real line and the unit circle. For the closed unit interval $[0, 1]$, we conjecture that there exists only one topological quandle structure on it, i.e. the trivial one. Some evidences are provided to support our conjecture.
\end{abstract}
\section{Introduction}
Quandles and racks are in general non-associative structures whose axioms correspond to the algebraic distillation of the Reidemeister moves in knot theory. Quandles have been investigated by topologists for the purpose of constructing knot and link invariants, and by algebraists for the aim of classification of finite quandles \cite{Hulpke} and the construction of pointed Hopf algebras \cite{AG}. The earliest known work on racks (see definition below) is contained
in the 1959 correspondence between John Conway and Gavin Wraith who studied racks in the context of the conjugation operation in a group.   Around 1982,  Joyce \cite{Joyce} (used the term quandle) and Matveev \cite{Matveev} (who called them distributive groupoids)  introduced independently the notion of quandle.

A \emph{topological rack} $X$ consists of both a rack structure and a topological structure which are compatible \cite{Elhamdadi-Moutuou:Foundations}.  More precisely, the rack binary operation $*:  X\times X  \rTo X$ sending $ (x,y) \mto  x * y$ is continuous with respect to the topological structure, the right multiplication $R_x: y\mto y * x$ is a homeomorphism, and also the binary operation satisfies the {\it right distributivity}:  $\forall x,y,z\in X$,  $(x * y) * z=(x * z)* (y * z)$.  Here $X\times X$ is viewed as a topological space with the product topology. See \cite{EN, Joyce, Matveev} for more details.

In \cites{Rubinsztein:Top_Quandles}, Rubinsztein investigated topological quandles and extended the notion of coloring of a knot or link by a quandle to include topological quandles.  He proved that the coloring space of the link is a topological space (defined up to a homeomorphism) that is an invariant of the link and gave several computational examples. Jacobsson and Rubinsztein \cite{Jacobsson-Rubinsztein} computed the space of colorings of all prime knots with up to seven crossings and of all $(2,n)$-torus links. They also observed some similarities between the space of colorings of knots and Khovanov homology for all prime knots with up to seven crossings and for some eight-crossing knots. See \cite{KM, Jacobsson-Rubinsztein} for more details.

The aim of this paper is to investigate the topological quandle structures on the real line and the interval. In Section 2, we give a brief review of the definition of topological quandles and list some examples of topological quandles. Section 3 is devoted to the classification of all Alexander quandles on the Euclidean spaces $\mathbb{R}^1$ and $S^1$. In Section 4 we discuss the homogenous topological quandles. Some open problems are listed in Section 5. In particular, we conjecture that there exists only one topological quandle structure on the interval $[0, 1]$. Some evidences are also provided to support our conjecture.

\section{Basic Review of Topological Quandles}\label{sec2}
A \emph{topological rack} $X$ is a topological space with a rack binary operation $*:  X\times X  \rTo X$ sending $ (x,y) \mto  x * y$ that is continuous with respect to the topological structure, such that the right multiplication $R_x: y\mto y\ast x$ is a homeomorphism, and also the binary operation satisfies the {\it right distributivity}:  $\forall x,y,z\in X$,  $(x * y) * z=(x * z)* (y * z)$. In particular, if $x\ast x=x$ for each $x\in X$, then we say that $X$ is a \emph{topological quandle}. It is not difficult to observe that $R_x^{-1}: y\mto y\ast^{-1}x$ also provides $X$ a topological quandle structure $(X, \ast^{-1})$, called the \emph{dual quandle} of $(X, \ast)$. The set $Aut(X)$ of quandle automorphisms of $X$ forms a group under composition. Furthermore, if $X$ is a locally compact, locally path-connected, Hausdorff topological space, when equipped with the compact-open topology, $Aut(X)$ is a topological group \cite{Arens}. Recall from proposition 3.1 of \cite{Elhamdadi-Moutuou:Foundations} that the \emph{inner representation} of $X$ is the continuous map \[
 \begin{array}{lccc}
 R: & X & \rTo &Aut(X)\\
 & x & \mto & R_x,
 \end{array}
 \]
and that the {\it inner automorphism group} $Inn(X)$ of $X$ is the closure of the subgroup generated by the image of $X$ by $R$ in $Aut(X)$, $Inn(X):= \overline{<R(X)>}\subset Aut(X)$. Note that $Inn(X)$ is a normal subgroup of $Aut(x)$, since $fR_x=R_{f(x)}f$ for any automorphism $f\in Aut(X)$.

A topological quandle $X$ is called \emph{homogeneous} if  the automorphism group $Aut(X)$ acts transitively on $X$. If the group $Inn(X)$ acts transitively on $X$ then we call it {\it algebraically connected} or \emph{indecomposable} (so there will be no confusion with topological connectedness). In other words, for any $x, y\in X$ there are $\{x_1, \cdots, x_n\}\subseteq X$ such that
\begin{center}
$(\cdots((x\ast ^{\varepsilon_1}x_1)\ast^{\varepsilon_2}x_2)\cdots)\ast^{\varepsilon_n}x_n=y$,
\end{center}
where $\varepsilon_i=\pm1$. Obviously, an algebraically connected topological quandle is a homogeneous topological quandle.

Suppose we are given a topological quandle $X$, then we can consider the algebraic connectedness and the topological connectedness. In general, there is no general relation between them. For example, any algebraically connected quandle which contains more than one element equipped with the discrete topology is algebraically connected but not topologically connected. On the other hand, any connected topological space which contains more than one point equipped with the trivial quandle structure (i.e. $R_x=id$ for all $x$) is topologically connected but not algebraically connected. However we have the following result, which can be regarded as a extension of the obvious fact that if a quandle contains only two elements then it is algebraically disconnected (actually it must be trivial).
\begin{lemma}
Let $X$ be a topological quandle which consists of two topologically path-connected components, then $X$ can not be algebraically connected.
\end{lemma}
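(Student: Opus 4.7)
My plan is to decompose $X = A \sqcup B$ into its two path-connected components and exploit the fact that every right multiplication $R_x$ is a self-homeomorphism of $X$ which fixes $x$ (by the quandle identity $x \ast x = x$). Pick $x \in A$. Since $R_x$ is continuous, $R_x(A)$ is path-connected and contains $R_x(x)=x$, so $R_x(A)$ is contained in the path-component of $x$, namely $A$. Running the same argument for the homeomorphism $R_x^{-1}$, which also fixes $x$, yields $R_x^{-1}(A) \subseteq A$ and hence $R_x(A) = A$. Because $R_x$ is a bijection on $X$ and there are only two components, this forces $R_x(B)=B$. The case $x \in B$ is symmetric, so every right multiplication preserves the partition $\{A,B\}$.

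Consequently, every element of the subgroup $\langle R(X)\rangle \subset Aut(X)$ preserves $\{A,B\}$, so the orbit of any $a\in A$ under $\langle R(X)\rangle$ lies entirely in $A$. This is already enough to show that no finite composition of the form $(\cdots((a\ast^{\varepsilon_1}x_1)\ast^{\varepsilon_2}x_2)\cdots)\ast^{\varepsilon_n}x_n$ can equal a point of $B$.

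To upgrade this to a statement about $Inn(X)=\overline{\langle R(X)\rangle}$, I would use continuity of the evaluation map $\mathrm{ev}_a : Aut(X) \to X$, $f\mapsto f(a)$, in the compact-open topology. Its image on $\langle R(X)\rangle$ sits in $A$, hence its image on the closure sits in $\overline{A}$. Under the standing hypotheses (local compactness, local path-connectedness, Hausdorffness) that make $Aut(X)$ a topological group, path-components are open, and with exactly two components $A$ must also be closed; therefore $\overline{A}=A$, and the $Inn(X)$-orbit of $a$ is contained in $A\neq X$, ruling out transitivity.

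The main, and essentially only, obstacle is this last closure step: without some local structure on $X$ one cannot conclude that the components are closed, and one would then have to handle separately the pathological case in which both components are dense in $X$. The algebraic core of the argument, that each $R_x$ preserves both components, is immediate from continuity and the quandle identity.
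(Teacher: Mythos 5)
Your proof is correct and follows essentially the same route as the paper: both arguments hinge on the observation that $R_x$ fixes $x$ and is a homeomorphism, hence preserves the path-component of $x$ and therefore both components. The only difference is that the paper works with the finite-word characterization of algebraic connectedness and so never needs your final closure step; your extra care about $Inn(X)=\overline{\langle R(X)\rangle}$ addresses a legitimate subtlety that the paper glosses over, but it is not required for the argument as the paper states it.
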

\begin{proof}
Assume that $X$ consists of two connected components $X_1$ and $X_2$. For any $x_1\in X_1$ and $x_2\in X_2$, since $x_1\ast x_1=x_1, x_2\ast x_2=x_2$, it follows that
$X_1\ast x_1=X_1$ and $X_2\ast x_2=X_2$. If for some $x_1\in X_1$ and $x_2\in X_2$, $x_1\ast x_2\in X_2$. Choose a path $l$ connecting $x_1\ast x_2$ and $x_2\ast x_2=x_2$ in $X_2$. Since $R_{x_2}$ is an automorphism of $X$, then $R^{-1}_{x_2}(l)$ is a path connecting $x_1$ and $x_2$. This contradicts the assumption that $X_1$ and $X_2$ are two connected components of $X$. It follows that for any $x_1\in X_1$ and $x_2\in X_2$, $x_1\ast x_2\in X_1$, which means $X_1\ast x_2=X_1$. We conclude that for any $x\in X$ we have $X_1\ast^{\pm1} x=X_1$, it follows that $X$ is algebraically disconnected.
\end{proof}

We end this section with some examples of topological quandles. Obviously every quandle can be trivially made into a topological quandle by considering it with the discrete topology or the indiscrete topology. On the other hand, for each topological space $X$, one can define a quandle structure on $X$ by defining $x\ast y=x$ for any $x, y\in X$, which is a trivial quandle. In this paper we are mainly interested in the existence of nontrivial rack/quandle structures on topological spaces.

If $X$ is a topological group, then we can associate two quandle structures on $X$, the \emph{conjugation quandle Conj$(X)$} and the \emph{core quandle Core$(X)$}. The operations of these two quandles are defined by $x\ast y=yxy^{-1}$ and $x\ast y=yx^{-1}y$ respectively. Note that when $X$ is an abelian group then the associated conjugation quandle is trivial. However the core quandle is a trivial quandle if and only if $X$ is an abelian group with every nontrivial element of order 2. Actually, if the core quandle is trivial, then $x\ast y=yx^{-1}y=x$. By putting $x=1$ one obtains that $y^2=1$ for any element of $X$. Now $xy=yx$ follows immediately from $(xy)^2=1$. Conversely, if $X$ is an abelian group with every nontrivial element of order 2, then $x\ast y=yx^{-1}y=y^2x=x$, which implies that the quandle structure is trivial. Moreover, if we have a homeomorphism $\sigma$ of $X$, the operation $x\ast y=\sigma(xy^{-1})y$ makes $X$ into a topological quandle.

On the other hand, note that for a product space $X\times Y$, if $(X, \ast_X)$ and $(Y, \ast_Y)$ are both topological quandles, then $X\times Y$ is a topological quandle with operation $(x_1, y_1)\ast(x_2, y_2)=(x_1\ast_X x_2, y_1\ast_Y y_2)$.

Here we list some familiar examples of topological rack/quandle, more examples can be found in \cites{Rubinsztein:Top_Quandles,Elhamdadi-Moutuou:Foundations}.
\begin{enumerate}
  \item
   \emph{The real line} $\mathbb{R}^1$: Since $\mathbb{R}^1$ is a topological group, as we mentioned above, each homeomorphism of $\mathbb{R}^1$ makes $\mathbb{R}^1$ into a topological quandle. Recall that a homeomorphism $\sigma$ of $\mathbb{R}^1$ $($as a topological group$)$  has the form $\sigma(x)=tx$ $(t\neq0)$, which induces the Alexander quandle structure on $\mathbb{R}^1$ with operation $x\ast y=tx+(1-t)y$ $(t\neq0)$. Similarly the operation $(x_1, \cdots, x_n)\ast(y_1, \cdots, y_n)=(t_1x_1+(1-t_1)y_1, \cdots, t_nx_n+(1-t_n)y_n)$ $(t_i\neq0, 1\leq i\leq n)$ derived from the homeomorphism of $\mathbb{R}^n$ makes $\mathbb{R}^n$ into a topological quandle.

  \item
   \emph{The sphere} $\mathbb{S}^n$: Consider the unit sphere in $\mathbb{R}^{n+1}$, define $x\ast y=2(x\cdot y)y-x$, here $\cdot$ denotes the inner product of $\mathbb{R}^{n+1}$. It is easy to show that this operation makes $S^n$ into a topological quandle.

  \item
  \emph{The projective space} $\mathbb{RP}^n$: A quandle structure of $\mathbb{RP}^n$ can be directly derived from the quandle structure on $\mathbb{S}^n$ mentioned above.

  \item
   \emph{The Grassmannian} $Gr(r, V)$: Let $Gr(r, V)$ be the Grassmannian of $r$-dimensional linear subspaces a vector space $V$. For two subspaces $U, W\in Gr(r, V)$ and $v\in V$ we define
  \begin{center}
  $v\ast W=2\sum\limits_{i=1}^r(w_i\cdot v)w_i-v$,
  \end{center}
  where $\{w_i\}$ denotes an orthonormal basis of $W$. Now $U\ast W$ is defined to be $\{u\ast W|u\in U\}$. It is easy to check that this operation induces a topological quandle structure on $Gr(r, V)$.
\end{enumerate}

Given two topological quandles $(X, \ast_X)$ and $(Y, \ast_Y)$, we say that $(X, \ast_X)$ and $(Y, \ast_Y)$ are \emph{isomorphic} if there exists a homeomorphism $f$ from $X$ to $Y$ such that $f(x_1\ast_X x_2)=f(x_1)\ast_Y f(x_2)$. If $X$ and $Y$ are oriented topological spaces then we require that $f$ is orientation-preserving. Recall that for each element $x$ of an unoriented topological quandle $X$, the right multiplication $R_x: X\rightarrow X$ is a homeomorphism. We remark that if $X$ is a connected oriented topological space, then either $R_x$ $(x\in X)$ are all orientation-preserving or all orientation-reversing.

We give one simple example to show that when one places different topologies on the same quandle, it is possible to obtain two different topological quandles. Consider the quandle $X$ which consists of three elements $\{1, 2, 3\}$, and the quandle operations are defined below
\begin{center}
$\begin{bmatrix}
  1 & 1 & 1 \\
  3 & 2 & 2 \\
  2 & 3 & 3 \\
\end{bmatrix}$.
\end{center}
Here the $(i, j)$ entry denotes $i\ast j$ $(1\leq i, j\leq3)$. Let $\tau_1=\{\varnothing, \{1\}, \{1, 2, 3\}\}$ and $\tau_2=\{\varnothing, \{1\}, \{2\}, \{1, 2\},\{1, 2, 3\}\}$ be two topologies on $X$. It is easy to see that $(X, \tau_1)$ and $(X, \tau_2)$ are both topological quandles. However they are not isomorphic as topological quandles since they are not even homeomorphic as topological spaces.

In this article, we want to investigate the following problem: given a topological space $X$, how many different (up to isomorphism) topological quandle structures are there? In particular, is there a topological space which can only be equipped with the trivial quandle structure?

\section{Classification of Topological Affine Quandles on $\mathbb{R}$}\label{sec3}
Let $(\mathbb{R}, \ast_1), (\mathbb{R}, \ast_2)$ be two topological quandles, where the operations are defined by $x\ast_1 y=t_1x+(1-t_1)y$ and $x\ast_2 y=t_2x+(1-t_2)y$ $(t_i\neq0, i\in\{1, 2\})$. The aim of this section is to determine when topological quandles $(\mathbb{R}, \ast_1)$ and $(\mathbb{R}, \ast_2)$ are isomorphic. We have the following lemmas.

\begin{lemma}\label{1}
If $t_1>0$ and $t_2<0$, then $(\mathbb{R}, \ast_1)$ and $(\mathbb{R}, \ast_2)$ are different topological quandles.
\end{lemma}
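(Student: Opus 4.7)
The plan is to exploit the orientation convention that the paper attached to its notion of isomorphism of oriented topological quandles. The ambient space $\mathbb{R}$ carries a natural orientation, so any isomorphism $f\colon (\mathbb{R},\ast_1)\to(\mathbb{R},\ast_2)$ must be an orientation-preserving homeomorphism of $\mathbb{R}$, i.e.\ a strictly increasing continuous bijection.

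First I would examine the right multiplications. In $(\mathbb{R},\ast_i)$, the map $R_y^{(i)}\colon x\mapsto t_i x+(1-t_i)y$ is an affine homeomorphism of $\mathbb{R}$ with slope $t_i$. Hence $R_y^{(1)}$ is strictly increasing (orientation-preserving) for every $y$ because $t_1>0$, whereas $R_y^{(2)}$ is strictly decreasing (orientation-reversing) for every $y$ because $t_2<0$. This matches the remark preceding the lemma that on a connected oriented space the $R_x$ are uniformly orientation-preserving or uniformly orientation-reversing.

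Next I would use the defining intertwining relation of an isomorphism. Assuming for contradiction that $f$ is an isomorphism from $(\mathbb{R},\ast_1)$ to $(\mathbb{R},\ast_2)$, the equality $f(x\ast_1 y)=f(x)\ast_2 f(y)$ translates into
\[
R^{(2)}_{f(y)} \;=\; f\circ R^{(1)}_y \circ f^{-1}
\]
for every $y\in\mathbb{R}$. Both $f$ and $R^{(1)}_y$ are orientation-preserving, so the right-hand side is orientation-preserving; but the left-hand side is orientation-reversing since $t_2<0$. This contradicts the fact that an orientation-preserving map cannot equal an orientation-reversing one.

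I do not expect any real obstacle here: the only ingredients are the definition of isomorphism for oriented topological quandles (supplied in Section 2), the sign of the slope of an affine homeomorphism, and the conjugation identity $f R_y=R_{f(y)}f$. The mildest care-point is simply to invoke the orientation-preservation clause in the isomorphism definition so that the sign of the slope becomes an honest invariant distinguishing the two quandles.
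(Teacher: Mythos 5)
Your proof is correct and follows essentially the same route as the paper: both arguments observe that $R_y$ has slope $t_1>0$ (orientation-preserving) in the first quandle while $R_{f(y)}$ has slope $t_2<0$ (orientation-reversing) in the second, and that the conjugation identity $R_{f(y)}=f\circ R_y\circ f^{-1}$ forces these to have the same orientation type. (In fact you do not even need the orientation-preserving clause on $f$: conjugating an increasing homeomorphism of $\mathbb{R}$ by any homeomorphism, increasing or decreasing, yields an increasing one, so the contradiction persists.)
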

\begin{proof}
If not, suppose $f: \mathbb{R}\rightarrow\mathbb{R}$ induces an isomorphism between $(\mathbb{R}, \ast_1)$ and $(\mathbb{R}, \ast_2)$. Notice that $R_x$ in $(\mathbb{R}, \ast_1)$ is orientation-preserving, however $R_{f(x)}$ in $(\mathbb{R}, \ast_2)$ is orientation-reversing.
\end{proof}

Without loss of generality, let us assume that $t_1$ and $t_2$ are both positive.

\begin{lemma}\label{2}
If $t_1=1$ and $t_2\neq 1$, then $(\mathbb{R}, \ast_1)$ and $(\mathbb{R}, \ast_2)$ are different topological quandles.
\end{lemma}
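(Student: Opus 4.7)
The plan is to use the observation that when $t_1=1$ the operation collapses to $x\ast_1 y = x$, so $(\mathbb{R},\ast_1)$ is the trivial quandle on $\mathbb{R}$. Hence every right multiplication $R_y$ in $(\mathbb{R},\ast_1)$ is the identity map. I would therefore argue by extracting a contradiction from this total triviality via the homomorphism equation.

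Suppose for contradiction that some homeomorphism $f\colon \mathbb{R}\to\mathbb{R}$ realizes an isomorphism $(\mathbb{R},\ast_1)\cong(\mathbb{R},\ast_2)$. For arbitrary $x,y\in\mathbb{R}$, I would apply the isomorphism identity to the trivial operation: $f(x) = f(x\ast_1 y) = f(x)\ast_2 f(y) = t_2 f(x) + (1-t_2)f(y)$. Rearranging gives $(1-t_2)\bigl(f(x)-f(y)\bigr)=0$, and since $t_2\neq 1$ this forces $f(x)=f(y)$ for all $x,y$. That contradicts the injectivity of the homeomorphism $f$.

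The ``main obstacle'' here is really only bookkeeping — there is no genuine difficulty, because the algebraic triviality of $(\mathbb{R},\ast_1)$ is so restrictive that any candidate isomorphism is immediately forced to be constant. One mild point to be careful about is that the proof does not need the orientation hypothesis that is implicitly being carried over from Lemma~\ref{1}; the argument above rules out every homeomorphism, orientation-preserving or not, which is actually a strictly stronger statement than what is needed. No extra structural facts about Alexander quandles or about $\mathrm{Aut}(\mathbb{R})$ are required.
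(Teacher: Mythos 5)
Your proof is correct and is essentially identical to the paper's: both apply the isomorphism identity to the trivial operation $x\ast_1 y=x$ to get $f(x)=t_2f(x)+(1-t_2)f(y)$ and conclude that $f$ must be constant, contradicting injectivity. You merely make the rearrangement step $(1-t_2)(f(x)-f(y))=0$ explicit, which the paper leaves implicit.
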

\begin{proof}
If there exists a homeomorphism $f$ of $\mathbb{R}$ which induces an isomorphism on the quandle structure, then
\begin{center}
$f(x)=f(x\ast_1y)=f(x)\ast_2f(y)=t_2f(x)+(1-t_2)f(y)$,
\end{center}
which implies $f$ is a constant function. This contradicts with the assumption that $f$ is an isomorphism.
\end{proof}

\begin{lemma}\label{3}
If $t_1>1 $ and $0<t_2<1$, then $(\mathbb{R}, \ast_1)$ and $(\mathbb{R}, \ast_2)$ are different topological quandles.
\end{lemma}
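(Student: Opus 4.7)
The plan is to derive a contradiction by comparing the long-term dynamics of the right-multiplication maps on the two sides. Suppose, for contradiction, that there is a homeomorphism $f:\mathbb{R}\rightarrow\mathbb{R}$ inducing an isomorphism from $(\mathbb{R},\ast_1)$ to $(\mathbb{R},\ast_2)$. Fixing $y\in\mathbb{R}$ and varying $x$, the isomorphism identity $f(x\ast_1 y)=f(x)\ast_2 f(y)$ becomes the topological conjugacy
\[
f\circ R_y^{(1)}=R_{f(y)}^{(2)}\circ f,
\]
where $R_y^{(i)}$ denotes right multiplication by $y$ in $(\mathbb{R},\ast_i)$. Iterating this gives $f\circ (R_y^{(1)})^n=(R_{f(y)}^{(2)})^n\circ f$ for every $n\geq 1$.

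Next I would write down these iterates explicitly. Since $R_y^{(1)}(x)=t_1 x+(1-t_1)y$, one checks by induction that
\[
(R_y^{(1)})^n(x)=y+t_1^n(x-y),
\]
and similarly $(R_{f(y)}^{(2)})^n(x')=f(y)+t_2^n(x'-f(y))$. Pick any $x\neq y$. From $t_1>1$ the sequence $y+t_1^n(x-y)$ tends to $+\infty$ or $-\infty$ (according to the sign of $x-y$). From $0<t_2<1$ the sequence $f(y)+t_2^n(f(x)-f(y))$ converges to $f(y)$.

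The conjugacy forces these two behaviours to match under $f$: $f\bigl(y+t_1^n(x-y)\bigr)\to f(y)$ as $n\to\infty$. This is the desired contradiction, because any homeomorphism of $\mathbb{R}$ is monotone and satisfies $\lim_{z\to\pm\infty}f(z)=\pm\infty$, so $f\bigl(y+t_1^n(x-y)\bigr)$ must be unbounded in $n$ and in particular cannot converge to the finite value $f(y)$. Hence no such $f$ exists, so $(\mathbb{R},\ast_1)$ and $(\mathbb{R},\ast_2)$ are not isomorphic as topological quandles.

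The argument is essentially a dynamical one: $R_y^{(1)}$ has $y$ as a repelling fixed point while $R_{f(y)}^{(2)}$ has $f(y)$ as an attracting fixed point, and topological conjugacy must preserve this attracting/repelling dichotomy. I do not expect a genuine obstacle; the only thing to be careful about is to write the contradiction using the behaviour of $f$ at $\pm\infty$ (which is where one uses that $f$ is a homeomorphism of all of $\mathbb{R}$, not merely continuous).
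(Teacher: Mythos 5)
Your proof is correct. It rests on the same functional equation as the paper's proof --- the identity $f(t_1x+(1-t_1)y)=t_2f(x)+(1-t_2)f(y)$, specialized to a fixed $y$ --- but you extract the contradiction differently. The paper normalizes so that $f(0)=0$, sets $y=0$ to get $f(t_1x)=t_2f(x)$, and then gets a contradiction in a single step from monotonicity: if $f$ is increasing and $x>0$ then $f(t_1x)>f(x)>0$ while $t_2f(x)<f(x)$ (and symmetrically in the decreasing case). You instead keep $y$ arbitrary, view the identity as a topological conjugacy $f\circ R_y^{(1)}=R_{f(y)}^{(2)}\circ f$, iterate, and compare the repelling fixed point of $R_y^{(1)}$ (since $t_1>1$) with the attracting fixed point of $R_{f(y)}^{(2)}$ (since $0<t_2<1$), using that a homeomorphism of $\mathbb{R}$ is proper to rule out $f\bigl(y+t_1^n(x-y)\bigr)\to f(y)$. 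Your route is slightly longer but conceptually cleaner (conjugacy must preserve the attracting/repelling dichotomy, no normalization of $f$ needed), and it is arguably more complete than the paper's rather terse final sentence; the paper's route is shorter and purely order-theoretic. Both are valid.
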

\begin{proof}
We assume there exists a homeomorphism $f$ of $\mathbb{R}$ which preserves the quandle structure. It follows that
\begin{center}
$f(t_1x+(1-t_1)y)=f(x\ast_1y)=f(x)\ast_2f(y)=t_2f(x)+(1-t_2)f(y)$.
\end{center}
Note that $f(x)+b$ also gives an isomorphism from $(\mathbb{R}, \ast_1)$ to $(\mathbb{R}, \ast_2)$, without loss of generality we assume that $f(0)=0$. Let $y=0$, we obtain
$f(t_1x)=t_2f(x)$. However this contradicts with the assumption that $f$ is monotonic.
\end{proof}

\begin{lemma}\label{4}
If $t_1> t_2> 1$ then $(\mathbb{R}, \ast_1)$ and $(\mathbb{R}, \ast_2)$ are different topological quandles.
\end{lemma}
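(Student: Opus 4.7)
The plan is to extract a rigid scaling relation from the hypothetical isomorphism $f$, iterate it to produce a divergent right-hand difference quotient at every point, and then invoke Lebesgue's theorem on the differentiability of monotone functions to obtain a contradiction.

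I would begin as in Lemma~\ref{3}: since adding a constant to $f$ still yields a quandle isomorphism, we may assume $f(0) = 0$. Rewriting the isomorphism relation $f(t_1 x + (1 - t_1) y) = t_2 f(x) + (1 - t_2) f(y)$ with $x = y + u$ gives
\[
f(y + t_1 u) - f(y) = t_2 \bigl[ f(y + u) - f(y) \bigr] \qquad \text{for all } y, u \in \mathbb{R},
\]
and iterating by replacing $u$ with $u/t_1$ produces, for every integer $n \geq 1$,
\[
f(y + u/t_1^n) - f(y) = \frac{1}{t_2^n}\bigl[ f(y + u) - f(y) \bigr].
\]

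Now $f$, being a homeomorphism of $\mathbb{R}$, is strictly monotone; without loss of generality increasing (the decreasing case is handled symmetrically by post-composing with $x \mapsto -x$). Setting $u = 1$ and $\Delta(y) := f(y + 1) - f(y) > 0$, the one-sided difference quotient at $y$ along $h_n := 1/t_1^n$ becomes
\[
\frac{f(y + h_n) - f(y)}{h_n} = \Delta(y) \left(\frac{t_1}{t_2}\right)^{\! n} \longrightarrow + \infty \quad (n \to \infty),
\]
since $t_1 > t_2 > 1$. Crucially, this divergence occurs at \emph{every} $y \in \mathbb{R}$.

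To finish, I would invoke Lebesgue's theorem on monotone functions: at almost every $y \in \mathbb{R}$ the limit $\lim_{h \to 0^+}[f(y + h) - f(y)]/h$ exists as a finite number $f'(y)$. At any such $y_0$ the difference quotient along the particular sequence $h_n = 1/t_1^n$ must also tend to $f'(y_0) < \infty$, contradicting the divergence above. Hence no isomorphism $f$ exists. The main point is this last step: a sequence-wise blow-up at a single $y$ is not by itself impossible for a monotone function (consider $x \mapsto \sqrt{x}$ at $0$), and it is the combination of divergence at \emph{every} $y$ with the a.e.\ finite differentiability of monotone functions that produces the contradiction.
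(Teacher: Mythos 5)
Your proof is correct, and it takes a genuinely different route from the paper's. The paper normalizes $\phi(0)=0$, $\phi(1)=1$, extracts the two multiplicative relations $\phi(t_1x)=t_2\phi(x)$ and $\phi((1-t_1)x)=(1-t_2)\phi(x)$, and derives a contradiction from the values $\phi\bigl(t_1^m/(1-t_1)^{2n}\bigr)=t_2^m/(1-t_2)^{2n}$ by a density argument: choosing $m_i/n_i\to \ln(1-t_1)^2/\ln(t_1)$ so that the arguments tend to $1$ while the values do not, using the strict monotonicity of $x\mapsto \ln(x-1)^2/\ln(x)$ on $(1,+\infty)$. You instead use only the single increment identity $f(y+t_1u)-f(y)=t_2\bigl[f(y+u)-f(y)\bigr]$, iterate it to force the right difference quotient along $h_n=t_1^{-n}$ to blow up like $(t_1/t_2)^n$ at \emph{every} point, and contradict Lebesgue's a.e.\ finite differentiability of monotone functions; your parenthetical about $\sqrt{x}$ correctly identifies why the ``at every point'' quantifier is essential. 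Your route is shorter, needs neither the normalization $\phi(1)=1$ nor the relation involving $1-t_i$ nor any logarithmic bookkeeping, and is more general: the same computation disposes of $t_1>1>t_2>0$ (Lemma~\ref{3}), and with $h_n=t_1^{\,n}$ it handles $0<t_2<t_1<1$ (Lemma~\ref{5}) as well. The trade-off is the appeal to measure theory where the paper uses only monotonicity and continuity; if you want to stay elementary, note that your identity gives $f(y+kt_1^{-n})-f(y+(k-1)t_1^{-n})=t_2^{-n}\Delta(y+(k-1)t_1^{-n})\geq c\,t_2^{-n}$ with $c=\min_{[0,1]}\Delta>0$, so summing roughly $t_1^{\,n}$ such increments inside $[0,1]$ forces $f(1)-f(0)\gtrsim c\,(t_1/t_2)^n\to\infty$, a contradiction requiring no differentiation theorem.
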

\begin{proof}
If $\phi$ is a quandle isomorphism between $(\mathbb{R}, \ast_1)$ and $(\mathbb{R}, \ast_2)$, then $\phi$ is a homeomorphism of the real line satisfying $\phi(t_1x+(1-t_1)y)=t_2\phi(x)+(1-t_2)\phi(y)$.  Without loss of generality we can assume $\phi(0)=0$. Since $\phi(0)=0$ thus $\phi(1)\neq 0$.  By considering the function $\psi(x)=\frac{\phi(x)}{\phi(1)}$ which is still a quandle isomorphism, one can then assume that $\phi(1)=1$. Then for all $x \in \mathbb{R}$, we have
\begin{center}
$\phi(t_1x)=t_2\phi(x)$ and $\phi((1-t_1)x)=(1-t_2)\phi(x).$
\end{center}
In other words,
\begin{center}
$\frac{1}{t_2}\phi(x)=\phi(\frac{x}{t_1})$ and $\frac{1}{1-t_2}\phi(x)=\phi(\frac{x}{1-t_1})$.
\end{center}
We conclude that for any $m, n\in\mathbb{Z}$ we have
\begin{center}
$\phi(\frac{t_1^m}{(1-t_1)^{2n}}x)=\frac{t_2^m}{(1-t_2)^{2n}}\phi(x)$,
\end{center}
setting $x=1$ yields
\begin{center}
$\phi(\frac{t_1^m}{(1-t_1)^{2n}})=\frac{t_2^m}{(1-t_2)^{2n}}$.
\end{center}
Assume $\frac{ln(1-t_1)^2}{ln(t_1)}$ is a irrational number, where $ln$ stands for the natural logarithm. Now we can choose a sequence $\{\frac{m_i}{n_i}\}_{i\in\mathbb{N}}$ which converges to $\frac{ln(1-t_1)^2}{ln(t_1)}$ (if $\frac{ln(1-t_1)^2}{ln(t_1)}$ equals a rational number $\frac{m}{n}$, we just choose $\frac{m_i}{n_i}=\frac{m}{n}$ for any $i\in\mathbb{N}$), which means that $\phi(\frac{t_1^{m_i}}{(1-t_1)^{2n_i}})$ converges to $\phi(1)=1$. On the righthand side we have
\begin{center}
$\lim\limits_{i\rightarrow\infty}\frac{t_2^{m_i}}{(1-t_2)^{2n_i}}=\lim\limits_{i\rightarrow\infty}\exp(m_iln(t_2)-n_iln(1-t_2)^2)$.
\end{center}
In order to obtain the contradiction, it suffices to show that
\begin{center}
$\lim\limits_{i\rightarrow\infty}(m_iln(t_2)-n_iln(1-t_2)^2)\neq0$.
\end{center}
One computes
\begin{flalign*}
&\lim\limits_{i\rightarrow\infty}(m_iln(t_2)-n_iln(1-t_2)^2)&\\
=&\lim\limits_{i\rightarrow\infty}(\frac{m_i}{n_i}ln(t_2)n_i-n_iln(1-t_2)^2)\\
=&\lim\limits_{i\rightarrow\infty}n_i(\frac{ln(t_1-1)^2ln(t_2)-ln(t_2-1)^2ln(t_1)}{ln(t_1)})\\
\neq&0.
\end{flalign*}
The last inequality follows from the fact that $\lim\limits_{i\rightarrow\infty}n_i\neq0$ and
\begin{center}
$ln(t_1-1)^2ln(t_2)-ln(t_2-1)^2ln(t_1)\neq0$,
\end{center}
this can be proved by checking that the function $\frac{ln(x-1)^2}{ln(x)}$ is a monotonous increasing function on $(1, +\infty)$.
\end{proof}

\begin{lemma}\label{5}
If $0<t_2<t_1<1$, then $(\mathbb{R}, \ast_1)$ and $(\mathbb{R}, \ast_2)$ are different topological quandles.
\end{lemma}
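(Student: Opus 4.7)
The plan is to adapt the strategy of Lemma~\ref{4} to the regime $0<t_2<t_1<1$, since the shape of the argument depends only on having the two multipliers $t_1$ and $1-t_1$ available to generate a ``dense'' scale, together with injectivity of the auxiliary function $x\mapsto \frac{\ln(1-x)^2}{\ln x}$ on the relevant interval.

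First, I assume for contradiction that $\phi:\mathbb{R}\to\mathbb{R}$ is a topological quandle isomorphism. By translating (replacing $\phi$ by $\phi(x)-\phi(0)$) I may assume $\phi(0)=0$; by scaling (replacing $\phi$ by $\phi(x)/\phi(1)$) I may assume $\phi(1)=1$. Plugging $y=0$ and $x=0$ respectively into the identity
\[
\phi(t_1x+(1-t_1)y)=t_2\phi(x)+(1-t_2)\phi(y)
\]
yields the two functional equations $\phi(t_1x)=t_2\phi(x)$ and $\phi((1-t_1)x)=(1-t_2)\phi(x)$. Iterating and inverting (using that $R_x$ is a homeomorphism, so $\phi$ commutes with the inverse multiplications by $t_1$ and $1-t_1$) produces
\[
\phi\!\left(\tfrac{t_1^{m}}{(1-t_1)^{2n}}\right)=\tfrac{t_2^{m}}{(1-t_2)^{2n}}\quad\text{for all }m,n\in\mathbb{Z}.
\]

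Next I exploit that $0<t_1<1$ and $0<1-t_1<1$, so both $\ln t_1$ and $\ln(1-t_1)^2$ are \emph{negative}, and by choosing $(m,n)\in\mathbb{Z}^2$ I can make the exponent $m\ln t_1-2n\ln(1-t_1)$ hit any prescribed real number arbitrarily closely. In particular, pick a sequence of integer pairs $(m_i,n_i)$ with $n_i\to\infty$ (or constant, in the rational case, exactly as in Lemma~\ref{4}) such that $m_i/n_i\to \frac{\ln(1-t_1)^2}{\ln t_1}$; this makes $\frac{t_1^{m_i}}{(1-t_1)^{2n_i}}\to 1$, hence $\phi\!\left(\frac{t_1^{m_i}}{(1-t_1)^{2n_i}}\right)\to\phi(1)=1$ by continuity. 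On the other hand,
\[
\frac{t_2^{m_i}}{(1-t_2)^{2n_i}}=\exp\!\bigl(n_i\bigl(\tfrac{m_i}{n_i}\ln t_2-\ln(1-t_2)^2\bigr)\bigr),
\]
and the inner bracket tends to $\frac{\ln(1-t_1)^2\ln t_2-\ln(1-t_2)^2\ln t_1}{\ln t_1}$.

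The core remaining step is to show this limit is nonzero, equivalently
\[
\frac{\ln(1-t_1)^2}{\ln t_1}\neq\frac{\ln(1-t_2)^2}{\ln t_2},
\]
which reduces to checking that $g(x):=\frac{\ln(1-x)^2}{\ln x}$ is strictly monotonic on $(0,1)$. A direct computation of $g'(x)$ shows both terms $-\frac{2\ln x}{1-x}$ and $-\frac{2\ln(1-x)}{x}$ appearing in the numerator are positive for $x\in(0,1)$ (since $\ln x,\ln(1-x)<0$), so $g'(x)>0$; combined with $t_1\neq t_2$ this gives the required inequality and hence the contradiction. The only real obstacle is this monotonicity verification of $g$ on $(0,1)$; everything else is a direct transcription of the argument in Lemma~\ref{4}.
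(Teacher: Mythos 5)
Your proof is essentially correct, but it follows a genuinely different route from the paper's. The paper's own proof of Lemma~\ref{5} exploits the extra structure available when $0<t_2<t_1<1$: since $1-t_1>0$ there is no need to square, one gets $\phi\bigl(\tfrac{t_1^m}{(1-t_1)^{n}}\bigr)=\tfrac{t_2^m}{(1-t_2)^{n}}$ for all $m,n\in\mathbb{Z}$, and the contradiction is obtained purely from the \emph{monotonicity} of $\phi$ (which is available after normalizing $\phi(0)=0$, $\phi(1)=1$): using that $\frac{\ln(1-x)}{\ln x}$ is strictly increasing on $(0,1)$, one picks positive integers $m,n$ with $\frac{\ln(1-t_2)}{\ln t_2}<\frac{m}{n}<\frac{\ln(1-t_1)}{\ln t_1}$, which forces $\tfrac{t_1^m}{(1-t_1)^{n}}$ and $\tfrac{t_2^m}{(1-t_2)^{n}}$ to lie on opposite sides of $1$ --- an immediate order contradiction with no limits taken. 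You instead transplant the limiting argument of Lemma~\ref{4} (continuity of $\phi$ at $1$ plus divergence of the right-hand side); your monotonicity check of $g(x)=\frac{\ln(1-x)^2}{\ln x}$ on $(0,1)$ is correct and is just twice the paper's auxiliary function, so the key inequality holds. The one point you should be careful about: having $\frac{m_i}{n_i}\to\frac{\ln(1-t_1)^2}{\ln t_1}$ with $n_i\to\infty$ does \emph{not} by itself give $m_i\ln t_1-2n_i\ln(1-t_1)\to 0$ (the product $n_i\cdot o(1)$ is indeterminate); you need the stronger statement $|m_i-n_i\alpha|\to 0$, which holds for continued-fraction convergents of the irrational $\alpha$. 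This gap is inherited verbatim from the paper's Lemma~\ref{4} and is repairable, but the paper's order-theoretic proof of Lemma~\ref{5} avoids the issue entirely and is the simpler argument in this regime.
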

\begin{proof}
Suppose $\phi$ is a quandle isomorphism between $(\mathbb{R}, \ast_1)$ and $(\mathbb{R}, \ast_2)$. As before, we assume that $\phi(0)=0$ and $\phi(1)=1$, therefore $\phi(x)$ is a monotonous increasing function on $\mathbb{R}$. Similar to the proof of Lemma \ref{4}, we have
\begin{center}
$\phi(\frac{t_1^m}{(1-t_1)^{n}})=\frac{t_2^m}{(1-t_2)^{n}}$ $(m, n\in\mathbb{Z})$.
\end{center}
Since $\frac{ln(1-x)}{ln(x)}$ is a monotonous increasing function on $(0, 1)$, it follows that $\frac{ln(1-t_2)}{ln(t_2)}<\frac{ln(1-t_1)}{ln(t_1)}$. Choose a pair of positive integers $m, n$ such that
\begin{center}
$\frac{ln(1-t_2)}{ln(t_2)}<\frac{m}{n}<\frac{ln(1-t_1)}{ln(t_1)}$.
\end{center}
Now we have $\frac{t_1^m}{(1-t_1)^{n}}<1$ but $\frac{t_2^m}{(1-t_2)^{n}}>1$, which contradicts with the fact that $\phi(x)$ is a monotonous increasing function.
\end{proof}

To sum up, the theorem below follows directly from Lemma \ref{1}-\ref{5}.
\begin{theorem}\label{6}
Let $t_1 $ and $t_2 $ be two distinct real numbers both distinct from zero.  Then the Alexander quandle structures $(\mathbb{R}, \ast_1)$ and $(\mathbb{R}, \ast_2)$ can not be isomorphic.
\end{theorem}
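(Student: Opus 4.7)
The plan is to establish Theorem \ref{6} by a case analysis that bundles Lemmas \ref{1}--\ref{5}, after a small preliminary reduction handling negative parameters. Given distinct nonzero reals $t_1, t_2$, if they have opposite signs then Lemma \ref{1} applies immediately. If both are positive, I would split into subcases: (a) exactly one of them equals $1$ (dispatched by Lemma \ref{2}); (b) one exceeds $1$ while the other lies in $(0,1)$ (Lemma \ref{3}); (c) both lie in $(1, \infty)$, relabelled so that $t_1 > t_2 > 1$ (Lemma \ref{4}); (d) both lie in $(0,1)$, relabelled so that $0 < t_2 < t_1 < 1$ (Lemma \ref{5}). These exhaust the pairs of distinct positive values.

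The case where both $t_i$ are negative is not directly covered by Lemmas \ref{2}--\ref{5}, so my plan is to reduce it to the positive case via a squaring trick. A direct computation in the Alexander quandle $(\mathbb{R}, \ast_t)$ gives
\begin{equation*}
R_x^2(y) \;=\; t^2 y + (1-t^2)x,
\end{equation*}
so the iterated operation $y \ast^{(2)} x := R_x^2(y)$ is itself an Alexander quandle structure on $\mathbb{R}$, this time with parameter $t^2$. Any quandle isomorphism $\phi : (\mathbb{R}, \ast_1) \to (\mathbb{R}, \ast_2)$ satisfies $\phi\circ R_x = R_{\phi(x)}\circ\phi$, and hence also $\phi\circ R_x^2 = R_{\phi(x)}^2\circ\phi$, so it descends to a quandle isomorphism between the Alexander quandles of parameters $t_1^2$ and $t_2^2$. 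Since $t_1 \neq t_2$ are both negative, $t_1^2 \neq t_2^2$ and both are positive, so the positive-case analysis above supplies the contradiction.

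The bulk of the substantive work lies already in Lemmas \ref{3}--\ref{5}, which exploit the functional identities $\phi(t_1^m/(1-t_1)^n) = t_2^m/(1-t_2)^n$ together with monotonicity of expressions like $\log(1-t)/\log(t)$. The only modest obstacle for the theorem itself is making the case split exhaustive; once the squaring reduction is in place, the remaining argument is purely bookkeeping over the five lemmas.
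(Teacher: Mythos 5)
Your proposal is correct, and for the cases the paper actually treats it assembles Lemmas \ref{1}--\ref{5} in exactly the way the paper does (the paper's ``proof'' of Theorem \ref{6} is literally the sentence that it ``follows directly from Lemma \ref{1}--\ref{5}''). Where you genuinely diverge is in the both-negative case: after Lemma \ref{1} the paper simply declares ``without loss of generality, let us assume that $t_1$ and $t_2$ are both positive'' and never justifies this reduction --- note that the obvious symmetries (passing to the dual quandle replaces $t$ by $1/t$, and affine conjugation fixes $t$) preserve the sign of the parameter, so the case $t_1,t_2<0$ is not actually covered by the lemmas as stated. Your squaring trick fills this gap cleanly: the computation $R_x^2(y)=t^2y+(1-t^2)x$ is correct, an isomorphism $\phi$ intertwines $R_x$ with $R_{\phi(x)}$ and hence their squares, and $t\mapsto t^2$ is injective on the negative reals, so two distinct negative parameters descend to two distinct positive ones and the positive-parameter analysis applies. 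In short, your argument is not just a restatement of the paper's proof but a repair of it; the only thing I would add is an explicit remark that the squared operation is again an Alexander quandle structure (parameter $t^2\neq 0$), which you do state and which is all that is needed to invoke cases (a)--(d).
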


\begin{remark}
Recently, Theorem \ref{6} was generalized from $\mathbb{R}$ to $\mathbb{R}^n$ (see \cite{ESZ} for more details). Let consider the two topological quandles $(\mathbb{R}^n, \ast_t)$ and $(\mathbb{R}^n, \ast_s)$, where the quandle operations are defined by $x\ast_t y=tx+(I_{n\times n}-t)y$ and $x\ast_s y=sx+(I_{n\times n}-s)y$ respectively. Here we use the following notations
\begin{center}
$x=\begin{pmatrix}
     x_1 \\
     \vdots\\
     x_n\\
   \end{pmatrix}
, y=\begin{pmatrix}
     y_1 \\
     \vdots\\
     y_n\\
   \end{pmatrix}
, t=\begin{pmatrix}
      t_1 &  & \\
       & \ddots&  \\
      &  & t_n \\
    \end{pmatrix}
, s=\begin{pmatrix}
      s_1 &  & \\
       & \ddots&  \\
      &  & s_n \\
    \end{pmatrix}$,
\end{center}
where $t$ and $s$ are both diagonal matrices and $t_i\neq0$ and $s_i\neq0$ for all $1\leq i\leq n$. It was proved in \cite{ESZ} that $(\mathbb{R}^n, \ast_t)$ and $(\mathbb{R}^n, \ast_s)$ are isomorphic if and only if there exists a matrix $f\in GL_n(\mathbb{R})$ such that $ftf^{-1}=s$.
\end{remark}

\begin{corollary}
Let $S^1=\{e^{i\theta}|0\leqslant\theta\leqslant 2\pi\}$, and $\ast_i$ $(i=1, 2)$ be two quandle operations on $S^1$ which are defined as $e^{i\theta_1}\ast_ie^{i\theta_2}=e^{i(t_i\theta_1+(1-t_i)\theta_2)}$ $(0<t_i\leq 1, i=1, 2)$. Then $(S^1, \ast_1)$ and $(S^1, \ast_2)$ are isomorphic if and only if $t_1=t_2$.
\end{corollary}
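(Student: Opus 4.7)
The plan is to reduce the corollary to Theorem~\ref{6} by passing to the universal cover $p\colon \mathbb{R}\to S^1$, $\theta\mapsto e^{i\theta}$, under which the operations $\ast_i$ lift to the affine operations $x\ast_{t_i} y=t_ix+(1-t_i)y$ on $\mathbb{R}$. The ``if'' direction is trivial (take the identity). For the ``only if'' direction, first dispose of the case $t_1=1$ (or symmetrically $t_2=1$): then $(S^1,\ast_1)$ is the trivial quandle, and any quandle isomorphism must send it to a trivial quandle, forcing $t_2=1$ as well. So assume $0<t_1,t_2<1$ with $t_1\neq t_2$, and let $\phi\colon (S^1,\ast_1)\to (S^1,\ast_2)$ be an isomorphism.

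Because $\phi$ is by definition an orientation-preserving homeomorphism of the oriented circle, it lifts to a monotone increasing homeomorphism $\tilde\phi\colon\mathbb{R}\to\mathbb{R}$ satisfying $\tilde\phi(\theta+2\pi)=\tilde\phi(\theta)+2\pi$. After composing $\phi$ with a suitable rotation of $S^1$ (a quandle automorphism of each $(S^1,\ast_i)$) I may normalize $\tilde\phi(0)=0$, and after rescaling I may normalize $\tilde\phi(\theta_0)=\theta_0$ for some small $\theta_0>0$ that will be chosen later.

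Next I would establish a local functional equation. For $\theta_1,\theta_2$ in a sufficiently small interval $(-\delta,\delta)$, the convex combination $t_1\theta_1+(1-t_1)\theta_2$ also lies in $(-\delta,\delta)$, and lifting the quandle-isomorphism identity yields
\[
\tilde\phi(t_1\theta_1+(1-t_1)\theta_2)=t_2\tilde\phi(\theta_1)+(1-t_2)\tilde\phi(\theta_2)
\]
on this small domain. Setting $\theta_2=0$ gives $\tilde\phi(t_1\theta)=t_2\tilde\phi(\theta)$, and setting $\theta_1=0$ gives $\tilde\phi((1-t_1)\theta)=(1-t_2)\tilde\phi(\theta)$; iterating produces
\[
\tilde\phi\!\left(\frac{t_1^{m}}{(1-t_1)^{n}}\,\theta\right)=\frac{t_2^{m}}{(1-t_2)^{n}}\,\tilde\phi(\theta)
\]
for all nonnegative integers $m,n$ such that the argument stays in the local regime (note that for $0<t_1<1$, repeated application of $t_1$ contracts and repeated application of $(1-t_1)^{-1}$ expands, so the regime is nontrivial).

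Finally I would reproduce the monotonicity contradiction of Lemma~\ref{5}. Assuming without loss of generality $t_2<t_1$, the function $x\mapsto\ln(1-x)/\ln x$ is monotone on $(0,1)$, so one can pick positive integers $m,n$ with $m/n$ strictly between $\ln(1-t_2)/\ln t_2$ and $\ln(1-t_1)/\ln t_1$; for such $(m,n)$ the two ratios $t_1^{m}/(1-t_1)^{n}$ and $t_2^{m}/(1-t_2)^{n}$ sit on opposite sides of $1$. Multiplying the seed $\theta_0$ by this ratio sends the input to one side of $\theta_0$ while the output lies on the other, contradicting the monotonicity of $\tilde\phi$. The main obstacle is precisely this bookkeeping: unlike Theorem~\ref{6}, where the functional equation is global, here it is only valid on a small neighborhood, so the pair $(m,n)$ and the seed $\theta_0$ must be chosen together so that all intermediate quantities remain in $(-\delta,\delta)$; this is achievable because the contraction factor $t_1^{m}/(1-t_1)^{n}$ can be made as small as one wants once $m/n$ is fixed below $\ln(1-t_1)/\ln t_1$, by scaling $m$ and $n$ up.
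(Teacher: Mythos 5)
Your overall strategy (lift to a monotone increasing map, then exploit the monotonicity of $\ln(1-x)/\ln x$ to contradict the monotonicity of the lift) is the same as the paper's, but two steps of your execution do not work as written. First, the normalization $\tilde\phi(\theta_0)=\theta_0$ ``after rescaling'' is not available: the maps $\theta\mapsto c\theta$ are automorphisms of the affine quandle on $\mathbb{R}$ but do not descend to the circle, so only rotations can be used to normalize. (This is harmless, since the contradiction only needs $\tilde\phi(\theta_0)>0=\tilde\phi(0)$.) Second, and more seriously, your mechanism for keeping the iteration inside the local regime is backwards. With $0<t_2<t_1<1$ and $\frac{m}{n}<\frac{\ln(1-t_1)}{\ln t_1}$ one has $\frac{t_1^m}{(1-t_1)^n}=\exp\bigl(n\bigl(\frac{m}{n}\ln t_1-\ln(1-t_1)\bigr)\bigr)>1$, and the exponent is a positive multiple of $n$; replacing $(m,n)$ by $(km,kn)$ therefore sends this factor to $+\infty$, not to $0$ as your last sentence claims. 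The repair is easy but is the opposite of what you wrote: fix a single admissible pair $(m,n)$, apply the $m$ contractions $\theta\mapsto t_1\theta$ first and the $n$ expansions $\theta\mapsto\theta/(1-t_1)$ last, so that every intermediate point lies in $\bigl(0,\frac{t_1^m}{(1-t_1)^n}\theta_0\bigr]$, and only then choose the seed $\theta_0<\delta\,(1-t_1)^n t_1^{-m}$.

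The paper's proof avoids this bookkeeping entirely. It normalizes $\phi$ to fix the basepoint of $S^1$, cuts the circle there to obtain an increasing homeomorphism $f$ of $[0,2\pi]$ with $f(0)=0$, $f(2\pi)=2\pi$ satisfying the functional equation globally on the fundamental domain, and then uses only the two families of pure contractions seeded at the endpoint: $f(2t_1^m\pi)=2t_2^m\pi$ and $f(2(1-t_1)^n\pi)=2(1-t_2)^n\pi$, whose arguments automatically stay in $[0,2\pi]$. Choosing $m,n$ with $\frac{\ln(1-t_2)}{\ln t_2}<\frac{m}{n}<\frac{\ln(1-t_1)}{\ln t_1}$ gives $2t_1^m\pi>2(1-t_1)^n\pi$ while $2t_2^m\pi<2(1-t_2)^n\pi$, and the monotonicity contradiction comes from comparing these two points with each other rather than comparing a single ratio $t_1^m/(1-t_1)^n$ with $1$; no expansions, hence no local regime, are ever needed. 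If you adopt that formulation your argument goes through verbatim.
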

\begin{proof}
The main idea of the proof is similar to the the proof of Lemma \ref{5}. If there exists an isomorphism $\phi$ between $(S^1, \ast_1)$ and $(S^1, \ast_2)$, we can assume that $\phi(0)=0$ and $\phi(2\pi)=2\pi$. Then $\phi$ induces an homeomorphism $f$ from $[0, 2\pi]$ to $[0, 2\pi]$ which satisfies $f(0)=0, f(2\pi)=2\pi$ and $f(t_1\theta_1+(1-t_1)\theta_2)=t_2f(\theta_1)+(1-t_2)f(\theta_2)$ for any $0\leq\theta_i\leq2\pi$ $(i=1, 2)$. In particular, we have
\begin{center}
$f(2t_1^m\pi)=2t_2^m\pi$ and $f(2(1-t_1)^n\pi)=2(1-t_2)^n\pi$
\end{center}
for any $m, n\in\mathbb{Z}^+$.

Notice that if $t_1=1$, then it follows immediately that $t_2$ must be 1. Without loss of generality, let us assume $0<t_2<t_1<1$. Since $\frac{ln(1-x)}{ln(x)}$ is a monotonous increasing function on $(0, 1)$, we can find two positive integers $m, n$ such that
\begin{center}
$\frac{\ln(1-t_2)}{\ln(t_2)}<\frac{m}{n}<\frac{\ln(1-t_1)}{\ln(t_1)}$.
\end{center}
It follows that $(t_2)^m<(1-t_2)^n$ but $(t_1)^m>(1-t_1)^n$. However this is impossible because $f$ is monotonous increasing.
\end{proof}

\section{Homogeneous topological quandles}\label{sec4}
The main aim of this section is to give a general description of homogeneous topological quandles. The algebraic version of this construction was first given by Joyce in \cite{Joyce}, which also can be found in \cite{Hulpke}. In this section, all topological spaces are assumed to be topological manifolds.

Let $G$ be a topological group and $\sigma$ an automorphism on $G$. As we mentioned in section~\ref{sec2}, $(G, \ast)$ is a topological quandle if we define $x\ast y=\sigma(xy^{-1})y$. If there is a subgroup $H$ of $G$ such that $\sigma(h)=h$ for every $h\in H$, then the right cosets $G/H$ is a topological space and it inherits a topological quandle structure from $G$. Since $G$ acts transitively on the right of $G/H$, we conclude that $(G/H, \ast)$ is a homogeneous topological quandle. The following theorem is a topological version of Theorem 7.1 in \cite{Joyce}.
\begin{theorem}
Every homogeneous topological quandle $X$ can be realized as $(G/H, \ast)$ discussed above.
\end{theorem}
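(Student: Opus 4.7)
The plan is to follow the standard Joyce construction with $G = Aut(X)$, $H = Stab_G(x_0)$, and $\sigma$ given by conjugation by $R_{x_0}$, and to lift it to the topological setting using the compact-open topology on $Aut(X)$.

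Fix any $x_0 \in X$. Since $X$ is a manifold, it is locally compact, locally path-connected, and Hausdorff, so by the result of Arens cited in Section~\ref{sec2}, $G := Aut(X)$ is a topological group under the compact-open topology. By the homogeneity hypothesis, $G$ acts transitively on $X$. Let $H := \{g \in G : g(x_0) = x_0\}$. The orbit map $g \mapsto g^{-1}(x_0)$ descends to a continuous $G$-equivariant bijection from the right coset space $G/H$ onto $X$; because $X$ is a manifold, local cross-sections of the orbit map are available, so this bijection is in fact a homeomorphism.

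Define $\sigma : G \to G$ by $\sigma(g) = R_{x_0}\, g\, R_{x_0}^{-1}$; this is a continuous group automorphism, since conjugation by a fixed element of the topological group $G$ is continuous. The algebraic heart of the proof is the observation that $\sigma$ fixes $H$ pointwise: given $h \in H$, the quandle automorphism property of $h$ applied to any $y \in X$ gives
\begin{center}
$h R_{x_0}(y) = h(y \ast x_0) = h(y) \ast h(x_0) = h(y) \ast x_0 = R_{x_0} h(y)$,
\end{center}
so $h R_{x_0} = R_{x_0} h$, and hence $\sigma(h) = h$. With this $G$, $H$, and $\sigma$, the construction discussed earlier in this section produces a topological quandle on $G/H$.

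It remains to check that, under the identification $Hg \leftrightarrow g^{-1}(x_0)$, the coset operation $(Hg_1)\ast(Hg_2) = H\,\sigma(g_1 g_2^{-1})\,g_2$ agrees with the original operation on $X$. Using $R_{f(x)} = f R_x f^{-1}$ (the inner representation identity recalled in Section~\ref{sec2}) together with $R_{x_0}^{-1}(x_0)=x_0$, both sides reduce by direct calculation to the point $g_2^{-1} R_{x_0} g_2 g_1^{-1}(x_0)$, i.e.\ to $g_1^{-1}(x_0) \ast g_2^{-1}(x_0)$. The only genuine obstacle I foresee is the topological one, namely upgrading the set-theoretic bijection $G/H \to X$ to a homeomorphism; this is exactly what the blanket manifold hypothesis at the beginning of Section~\ref{sec4} is designed to handle, and the algebraic part of the argument is then essentially forced by the fact that elements of $H$ commute with $R_{x_0}$.
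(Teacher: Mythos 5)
Your construction is the same as the paper's: $G=Aut(X)$, $H$ the stabilizer of a basepoint, $\sigma$ conjugation by $R_{x_0}$, identification of $G/H$ with $X$ via the orbit map, and a direct verification of the quandle operation. The algebraic half of your argument is fine, and in one respect more careful than the paper's: you explicitly check that $\sigma$ fixes $H$ pointwise via $hR_{x_0}=R_{h(x_0)}h=R_{x_0}h$, which is needed for the coset quandle to be well defined and which the paper leaves implicit. The final computation identifying $(Hg_1)\ast(Hg_2)$ with $g_1^{-1}(x_0)\ast g_2^{-1}(x_0)$ is correct.

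The gap is exactly at the point you flag as ``the only genuine obstacle'' and then dismiss: the claim that ``because $X$ is a manifold, local cross-sections of the orbit map are available.'' A local cross-section of the orbit map $g\mapsto g^{-1}(x_0)$ near $x_0$ is a continuous assignment $y\mapsto s(y)$ of a \emph{quandle automorphism} with $s(y)^{-1}(x_0)=y$. The manifold structure of $X$ (strong local homogeneity) produces homeomorphisms pushing $x_0$ to nearby points $y$, continuously in $y$, but these are supported in a chart and there is no reason whatsoever for them to preserve the quandle operation; conversely, transitivity of $Aut(X)$ hands you, for each $y$, \emph{some} quandle automorphism moving $x_0$ to $y$, but with no continuity in $y$. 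Bridging these two facts is the real content of the topological half of the theorem, and it is precisely what the paper imports from Ford's work: it equips $Aut(X)$ with the topology of uniform convergence relative to the fine uniformity on $X$ and invokes Ford's Theorem 4.1 (for a transitive group of homeomorphisms of an SLH Tychonoff space, $G/H\rightarrow X$ is open). An alternative patch in your compact-open setting would be an open-mapping theorem for transitive actions of Polish groups (Effros), after checking that $Aut(X)$ is a closed, hence Polish, subgroup of $Homeo(X)$; but some such theorem must be cited or proved, since the openness of the orbit map does not follow from the manifold hypothesis alone.
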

\begin{proof}  Let $X$ be a homogeneous topological quandle.  It suffices to construct a topological group $G$ and a subgroup $H$, then prove that $X$ is isomorphic to $G/H$ as a topological quandle. Define $G=Aut(X)$ and $\sigma$ denotes the conjugation by $R_x$, where $x$ is a fixed point of $X$, that is $\sigma(f)=R_x^{-1}fR_x$. Let $H=\{f\in G|\; f(x)=x\}$.

Since $X$ is a topological manifold, then it is locally compact, Hausdorff and hence Tychonoff. Recall that a topological space is uniformizable if and only if it is Tychonoff. Now we choose the fine uniformity $\mathfrak{U}$ which is compatible with the original topology of $X$. Now for each $U\in \mathfrak{U}$ we define $\mathfrak{V}=\{(f, g)|\; (f(x), g(x))\in U, f,g\in Aut(X), x\in X\}$. It is easy to see that $\mathfrak{V}$ forms a uniform structure on $Aut(X)$. Since $\mathfrak{U}$ is fine, then each $f\in Aut(X)$ is uniformly continuous. It was proved by Ford in \cite{Ford} that if a group consisted of uniformly continuous homeomorphisms of $X$, then it is a topological group relative to the uniform convergence notion induced by the uniform structure of $X$. Now it is evident that $\phi: G/H\rightarrow X$, defined by $\phi(Hf)=f(x)$, is continuous and 1-1. On the other hand, each topological manifold is a SLH (strong local homogeneity) space. According to Theorem 4.1 in \cite{Ford}, if $X$ is a SLH, Tychonoff space and $Aut(X)$ is transitive, then $\phi: G/H\rightarrow X$ is open. This completes the proof that $G/H$ and $X$ are homeomorphism.

Finally
\begin{center}
$\phi(Hf\ast Hg)=\phi(H\sigma(fg^{-1})g)=\phi(HR_x^{-1}fg^{-1}R_xg)=(gR_xg^{-1}f)(x)=f(x)\ast g(x)$.
\end{center}
Hence $\phi$ is a quandle homomorphism. This finishes the proof.
\end{proof}

\section{A set of open questions}
\subsection{Topological Quandle Structures on the Closed Unit Interval}
We begin this section with a positive answer to the second question in the end of section~\ref{sec2}.
\begin{theorem}
There exist infinitely many topological spaces which only can be equipped with the trivial quandle structure.
\end{theorem}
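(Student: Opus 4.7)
The plan is to reduce the theorem to the problem of producing infinitely many \emph{rigid} topological spaces, where by rigid I mean that the only self-homeomorphism of $X$ is $\mathrm{id}_X$. The reduction is immediate: for any topological quandle structure $(X,\ast)$, each right multiplication $R_y\colon X\to X$ is a self-homeomorphism of $X$; if $X$ is rigid, then $R_y=\mathrm{id}_X$ for every $y\in X$, which unpacks to $x\ast y=x$ for all $x,y\in X$, i.e.\ the quandle is trivial. So the whole theorem follows once I produce infinitely many pairwise non-homeomorphic rigid spaces.

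For the construction I would use the family of finite Alexandrov chain spaces. For each integer $n\geq 1$, set
\[
X_n=\{0,1,\ldots,n\},\qquad \tau_n=\bigl\{\emptyset,\{0\},\{0,1\},\{0,1,2\},\ldots,\{0,1,\ldots,n-1\},X_n\bigr\}.
\]
This is clearly a topology (the open sets form a totally ordered chain under inclusion, so unions and intersections stay in the list), and it contains exactly one open set of each cardinality $0,1,\ldots,n+1$. Any self-homeomorphism of $X_n$ must send open sets to open sets of the same cardinality, so it must fix each $\{0,1,\ldots,k\}$ setwise; a trivial induction on $k$ then forces every point to be fixed, so $X_n$ is rigid. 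Since the $X_n$ have different cardinalities they are pairwise non-homeomorphic, giving the required infinite family.

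The only genuine content is the rigidity of $X_n$, and this is the short induction just described; there is no real obstacle, since the chain-of-cardinalities argument leaves the homeomorphism no choice. If one prefers Hausdorff, or connected, witnesses — to match the spirit of the preceding sections — one can instead take suitable finite graphs with all branching valences distinct and with leaves distinguished by the combinatorics of the paths joining them, but the finite Alexandrov family above yields the cleanest proof of the bare existence statement.
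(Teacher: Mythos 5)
Your proposal is correct and is essentially the paper's own argument: the paper uses exactly the same finite chain spaces $\{1,\dots,n\}$ with the nested topology $\{\varnothing,\{1\},\{1,2\},\dots\}$, and its proof that each $R_i^{-1}$ fixes every point is precisely your rigidity induction on the cardinalities of the open sets. Your only cosmetic difference is isolating the rigidity reduction as an explicit first step.
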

\begin{proof}
Let us consider the topological space $X=\{1, 2, \cdots, n\}$ equipped with the topology $\tau=\{\varnothing, \{1\}, \{1, 2\}, \cdots, \{1, \cdots, n\}\}$. Let $\ast$ be a quandle operation on $X$. Since $\{1\}$ is a open set, then $1\ast^{-1}i$ $(1\leq i\leq n)$ must be a open set, hence equals 1. On the other hand, since $\{1, 2\}$ is a open set and $1\ast^{-1}i=1$ $(1\leq i\leq n)$, we must have $2\ast^{-1}i=2$ $(1\leq i\leq n)$. Eventually we will find that $n\ast^{-1}i=n$ $(1\leq i\leq n)$. The proof is completed.
\end{proof}

Note that the topological space above is not Hausdorff. In general we would like to know that whether we can find a manifold example. We conjecture that the closed unit interval $[0, 1]$ is an example of this kind. In other words, we conjecture that the only topological quandle structure on $[0,1]$ is the trivial operation given by $x*y=x, \forall x, y \in [0,1]$. It obviously follows that $[a, b]$ has no non-trivial quandle structure for any $-\infty<a<b<+\infty$.

\begin{conjecture}\label{conjecture}
There is no non-trivial topological quandle structure on the closed interval $[0,1]$ of the real line.
\end{conjecture}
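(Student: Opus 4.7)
The plan is to first use the topology of $[0,1]$ to heavily constrain each right multiplication $R_x$, and then to attempt to force triviality by combining the continuity of $x\mapsto R_x$ with the conjugation identity $R_z R_y R_z^{-1}=R_{R_z(y)}$ (an immediate reformulation of right distributivity). As a first step I would show that every $R_x$ is an orientation-preserving homeomorphism of $[0,1]$ fixing both endpoints. Each $R_x$ is a self-homeomorphism of $[0,1]$ and hence monotone; since $R_0(0)=0*0=0$, $R_0$ cannot be decreasing (else $R_0(0)=1$), so $R_0$ is increasing. The remark in Section~\ref{sec2} on connected oriented spaces then forces every $R_x$ to be orientation-preserving, whence $R_x(0)=0$ and $R_x(1)=1$, i.e.\ $0*x=0$ and $1*x=1$ for every $x\in[0,1]$, so $0$ and $1$ are universal fixed points of the right multiplications.

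Next, for each $x$ I would study the closed fixed-point set $F_x=\{y\in[0,1]:R_x(y)=y\}\supseteq\{0,x,1\}$. The conjugation identity yields $R_z(F_y)=F_{R_z(y)}$, and in particular $R_y$ and $R_z$ commute whenever $R_z(y)=y$; the conjecture is equivalent to showing $F_x=[0,1]$ for every $x$. Suppose, for contradiction, that $R_{x_0}\neq \text{id}$ for some $x_0\in(0,1)$, and pick a maximal component $(a,b)$ of $[0,1]\setminus F_{x_0}$, so that $a,b\in F_{x_0}$ and $R_{x_0}$ is (say) strictly above the identity on $(a,b)$. For any $y\in(a,b)$ the orbit $y_n:=R_{x_0}^n(y)$ increases monotonically to $b$, and the conjugation identity iterated $n$ times gives $R_{y_n}=R_{x_0}^n R_y R_{x_0}^{-n}$, while continuity of $x\mapsto R_x$ forces $R_{y_n}\to R_b$ uniformly. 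The strategy is to exploit this dynamical limit: the iterated conjugate $R_{x_0}^n R_y R_{x_0}^{-n}$ pushes the action of $R_y$ on $(a,b)$ toward the endpoint $b$, which one would like to show is incompatible with $R_y$ being a nontrivial homeomorphism fixing $y\in(a,b)$.

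The main obstacle will be precisely this last step. The axioms a priori allow a large supply of continuous families $x\mapsto R_x$ of orientation-preserving homeomorphisms of $[0,1]$ satisfying $R_x(x)=x$, $R_x(0)=0$ and $R_x(1)=1$; what is missing is a sufficiently sharp rigidity consequence of distributivity to rule out all such nontrivial families. The discrete Alexander-quandle arguments of Lemmas~\ref{1}--\ref{5} do not transfer, because those affine operations never produce self-maps of $[0,1]$. A natural intermediate target would be the smooth (or real-analytic) version of the conjecture, where one can linearise $R_x$ at each fixed point and extract infinitesimal consequences of the conjugation identity; this should either resolve the smooth case or pinpoint which purely topological feature is needed to complete the contradiction sketched above.
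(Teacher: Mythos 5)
You are attempting to prove a statement that the paper itself only states as Conjecture~\ref{conjecture}: the authors do not prove it, they only assemble partial evidence, and your proposal --- as you candidly admit in your last paragraph --- also stops short of a proof. So the verdict is that there is a genuine gap, and it is the same gap the paper leaves open. On the positive side, your preliminary reductions are correct and essentially coincide with the paper's evidence. The paper obtains $0\ast x=0$ and $1\ast x=1$ by an intermediate-value argument on $x\mapsto R_x(0)$ (your route via $R_0(0)=0$ and the orientation remark of Section~\ref{sec2} gives the same conclusion), and its proof that $R_0=R_1=\mathrm{id}$ is exactly the conjugation mechanism you describe, in the one situation where it is strong enough: because $0$ is fixed by \emph{every} $R_x$, the identity $R_xR_0R_x^{-1}=R_{R_x(0)}=R_0$ makes $R_0$ central, hence $R_{R_0(x)}=R_0R_xR_0^{-1}=R_x$ \emph{exactly}, and iterating along the $R_0$-orbit of $x$ (which converges to fixed points of $R_0$) forces all these right multiplications to agree, yielding $x\ast 0=x\ast x=x$. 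Your formalism $F_x$ and the identity $R_z(F_y)=F_{R_z(y)}$ is a clean generalization of this.

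The gap is precisely the step you flag. For an interior $x_0$ with $R_{x_0}\neq\mathrm{id}$ and a complementary interval $(a,b)$ of $F_{x_0}$, any $y\in(a,b)$ satisfies $R_{x_0}(y)\neq y$, so you lose the commutativity that powered the endpoint argument: the conjugates $R_{y_n}=R_{x_0}^nR_yR_{x_0}^{-n}$ are no longer equal to $R_y$, and knowing that they converge uniformly to $R_b$ produces no visible contradiction with $R_y$ being a nontrivial homeomorphism fixing $0$, $y$ and $1$ --- nothing used so far excludes, say, a continuous family of homeomorphisms whose supports and conjugacy classes vary with $x$. The paper's only further evidence is of a different flavour: a lemma showing that the polynomial solutions of the distributivity equation are either $ax+(1-a)y$ or functions of $x$ alone, which settles the conjecture for polynomial operations but says nothing about the general continuous case. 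Your suggestion to attack the smooth case by linearising at fixed points is a sensible next step, but as written neither your argument nor the paper's contains the rigidity statement needed to conclude, so the conjecture remains open on both accounts.
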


Geometrically, the conjecture above implies that if $\ast$ is a quandle operation on $[0, 1]$ then the closed surface $F=\{(x, y, x\ast y)\subset\mathbb{R}^3|0\leqslant x\leqslant1, 0\leqslant y\leqslant1\}$ is flat, i.e. $F=\{(x, y, x)\subset\mathbb{R}^3|0\leqslant x\leqslant1, 0\leqslant y\leqslant1\}$. Let us use $\mathcal{F}$ to denote $\{(x, y, x)\subset\mathbb{R}^3|0\leqslant x\leqslant1, 0\leqslant y\leqslant1\}$. According to the definition of quandle, we observe that the segment $\{(x, x, x\ast x)|0\leqslant x\leqslant1\}\subset\mathcal{F}$. Besides of this, we also observe that $\partial F=\partial\mathcal{F}$:
\begin{itemize}
\item $0\ast x=0$ and $1\ast x=1$ for any $x\in [0, 1]$. Actually, since $R_x$ is an automorphism of $[0, 1]$ then it follows that $R_x(0)=0$ or 1. If $R_x(0)=1$ for some $x\in [0, 1]$, recall that $R_0(0)=0$, then there exists $t\in (0, x)$ such that $0<R_t(0)<1$, which means that $R_t$ is not an automorphism of $[0, 1]$. Therefore we always have $0\ast x=0$. It follows immediately that $1\ast x=1$ for any $x\in [0, 1]$.
\item $R_0=R_1=id$. We claim that if $R_0(x)=y$ $($or $R_1(x)=y)$, then $R_x=R_y$. In fact, for any $z\in [0, 1]$ there exists a real number $w\in [0, 1]$ such that $R_0(w)=z$. If $x\ast 0=y$, then we have
\begin{center}
$z\ast x=(w\ast 0)\ast x=(w\ast x)\ast(0\ast x)=(w\ast x)\ast 0=(w\ast 0)\ast(x\ast 0)=z\ast y$.
\end{center}

Now we prove that $R_0=id$. First let us consider a simple case: $x\ast 0\neq x$ for any $x\in (0, 1)$. Without loss of generality, we assume the inequality $x\ast 0>x$ holds for all $x\in (0, 1)$. For a fixed point $x\in (0, 1)$, we have the following monotonically increasing sequence
\begin{center}
$\{\cdots, R_0^{-3}(x), R_0^{-2}(x), R_0^{-1}(x), x, R_0(x), R_0^{2}(x), R_0^{3}(x), \cdots\}$.
\end{center}
For simplicity, we will use $x_n (n\in \mathbb{Z})$ to denote $R_0^n(x)$. In particular, $x_0=x$. According to our discussion above, we know that $R_{x_n}$ are all equivalent. It is clear that $\lim\limits_{n\rightarrow +\infty}x_n$ exists. Actually we must have $\lim\limits_{n\rightarrow +\infty}x_n=1$. Otherwise $\lim\limits_{n\rightarrow +\infty}x_n<1$ and $R_0(\lim\limits_{n\rightarrow +\infty}x_n)=\lim\limits_{n\rightarrow +\infty}R_0(x_n)=\lim\limits_{n\rightarrow +\infty}x_{n+1}=\lim\limits_{n\rightarrow +\infty}x_n$, which contradicts with our assumption that $R_0(x)>x$ for any $0<x<1$. In a similar manner one can proves that $\lim\limits_{n\rightarrow -\infty}x_n=0$. Then it follows that
\begin{center}
$R_0=\cdots=R_{x_{-3}}=R_{x_{-2}}=R_{x_{-1}}=R_{x_{0}}=R_{x_{1}}=R_{x_{2}}=R_{x_{3}}=\cdots=R_1$.
\end{center}
Let $x_0=x$ runs over $(0, 1)$ we obtain that $R_0=R_x=R_1$ for any $x\in (0, 1)$. Then for any $x\in [0, 1]$, we have $x\ast 0=x\ast x=x$, which contradicts with our assumption that $x\ast 0>x$ for any $x\in (0, 1)$. Hence we conclude that $R_0=id$. The result $R_1=id$ can be proved by an analogous argument.

For the general case, if $R_0\neq id$, then there exists some $x_0\in (0, 1)$ such that $R_0(x_0)\neq x_0$. Without loss of generality, we assume that $x_0\ast 0>x_0$. Let $x_+=\min\{t|t>x_0, t\ast0=t\}$ and $x_-=\max\{t|t<x_0, t\ast0=t\}$. Then the inequality $x\ast 0>x$ holds for any $x\in (x_-, x_+)$. Note that $R_0[x_-, x_+]=[x_-, x_+]$, since $R_0$ is monotonically increasing. By repeating the argument above one can prove that $R_x=R_y$ for any $x, y\in [x_-, x_+]$. In particular, it follows that $x\ast y=x\ast x=x$ for any $x, y\in [x_-, x_+]$.
\end{itemize}

In particular, we can prove a special case of Conjecture \ref{conjecture} as follows. If we denote the binary operation in a quandle by a map $f:X \times X \rightarrow X$ sending $(x,y)$ to $f(x,y)$, then the right distributivity axiom of a quandle can be written in the form
\begin{eqnarray}\label{Dist1}
    f(f(x,y),z)=f(f(x,z),f(y,z))
\end{eqnarray}
Now let assume that the function $f$ is a real polynomial in the variables $x$ and $y$.

Then we have the following
\begin{lemma}
Any polynomial solution $P(x,y)\in\mathbb{R}[x,y]$ to the equation (\ref{Dist1}) is either of the form $P(x,y)=ax+(1-a)y$ or $P(x,y)=P(x)$ a polynomial in the variable $x$ only.
\end{lemma}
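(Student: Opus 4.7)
The plan is to compare polynomial degrees on both sides of \eqref{Dist1} and exploit algebraic independence. If $\deg_y P = 0$, then $P\in\mathbb{R}[x]$, placing us in the second case of the lemma. Symmetrically, if $\deg_x P = 0$, then $P(x,y) = f(y)$ and \eqref{Dist1} reduces to $f(z) = f(f(z))$, whose polynomial solutions are $f(z) = z$ (the first case with $a=0$) or $f$ a constant (the second case). So I assume $m := \deg_y P \geq 1$, $n := \deg_x P \geq 1$, set $D := \deg P$, and write $P(x,y) = \sum_j p_j(x) y^j$ with $p_m \neq 0$.

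The first key step is to compare $z$-degrees on the two sides of \eqref{Dist1}. The left-hand side has $\deg_z = m$, whereas the right-hand side $P(P(x,z), P(y,z))$ has $\deg_z \leq Dm$, since each monomial $a_{ij}u^iv^j$ of $P$ contributes degree $(i+j)m$ after substituting $u = P(x,z)$ and $v = P(y,z)$. A direct calculation identifies the coefficient of $z^{Dm}$ on the right as $P_D(p_m(x), p_m(y))$, where $P_D$ is the degree-$D$ homogeneous part of $P$. For $D \geq 2$ this must vanish, but if $p_m$ were non-constant, then $p_m(x)$ and $p_m(y)$ would be algebraically independent in $\mathbb{R}[x,y]$, forcing $P_D \equiv 0$ and contradicting $D \geq 2$. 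Hence $p_m$ is a nonzero constant $c$. The analogous argument comparing top $x^{n^2}$ coefficients yields $q_n(y)^n = q_n(P(y,z))\,q_n(z)^{n-1}$, where $q_n$ is the coefficient of $x^n$ in $P$; tracking the leading $z$-coefficient (whose value $\alpha^n c^{\deg q_n}$ is nonzero when $q_n$ is non-constant) forces $q_n$ to be a nonzero constant $b$.

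Next I substitute $y = z$ in \eqref{Dist1} to obtain the polynomial identity $P(u, z) = P(u, g(z))$ in $\mathbb{R}[u, z]$, where $g(z) := P(z, z)$. Expanding $P(u, w) = \sum_i q_i(w) u^i$ and matching $u^i$-coefficients gives $q_i(z) = q_i(g(z))$ for every $i$. Since some $q_i$ is non-constant (as $m \geq 1$), comparing leading coefficients forces $\deg g = 1$, say $g(z) = \alpha z + \beta$ with $\alpha^{\deg q_i} = 1$ giving $\alpha \in \{\pm 1\}$. A periodicity argument eliminates $\alpha = 1$, $\beta \neq 0$ (a polynomial invariant under a nonzero translation is constant). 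For $\alpha = -1$, substituting $x = y$ in \eqref{Dist1} yields $P(x, z) + P(\beta - x, z) = \beta$; the $z^m$-coefficient of the left-hand side equals $p_m(x) + p_m(\beta - x) = 2c$ while the constant $\beta$ contributes $0$ (as $m \geq 1$), contradicting $c \neq 0$. So $g(z) = z$.

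Finally, $g(z) = P(z, z)$ has leading term $b z^n$, $c z^m$, or $(b+c)z^n$ according to whether $n > m$, $n < m$, or $n = m$; the constraint $\deg g = 1$ immediately yields $n = m = 1$, except in the edge case $n = m \geq 2$ with $b + c = 0$, which must be eliminated by further examination of the subleading diagonal coefficients of $P(z,z)$ in conjunction with \eqref{Dist1}. Once $n = m = 1$, substituting $P(x, y) = \alpha x + \beta y + \gamma$ into \eqref{Dist1} and matching coefficients yields $\beta(1 - \alpha - \beta) = 0$ and $\beta\gamma = 0$, whence either $\beta = 0$ (the second case of the lemma) or $\beta = 1 - \alpha$ and $\gamma = 0$, giving $P(x,y) = \alpha x + (1-\alpha) y$. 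The most delicate step will be the edge case $n = m \geq 2$ with leading diagonal cancellation $b + c = 0$, where the argument requires careful tracking of subleading coefficients of the distributivity identity.
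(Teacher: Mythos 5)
Your argument is considerably more careful than the paper's own proof for most of its length (the paper derives $f_y=f_yf_{xy}$ from the diagonal substitution and then simply asserts that $f_{xy}=1$ forces $f(x,y)=ax+by+c$, which is false for an arbitrary polynomial), and your steps establishing that $p_m$ and $q_n$ are nonzero constants, that $P(z,z)=\alpha z+\beta$, and then that $P(z,z)=z$, are all sound. The genuine gap is in the final step, and it is wider than the single edge case you flag. From $q_n=b$ and $p_m=c$ constant you know only that $P(x,y)=bx^n+cy^m+\sum_{i\le n-1,\ j\le m-1}a_{ij}x^iy^j$, so the diagonal $P(z,z)$ contains cross terms $a_{ij}z^{i+j}$ of degree up to $n+m-2$. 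Whenever $\min(n,m)\ge 2$ this bound is at least $\max(n,m)$, so the leading term of $P(z,z)$ is \emph{not} $bz^n$, $cz^m$ or $(b+c)z^n$ as you claim; the identity $P(z,z)=z$ then only imposes linear relations among $b$, $c$ and the $a_{ij}$ (for $n=m=2$ it gives $b+c+a_{11}=0$, $a_{10}+a_{01}=1$, $a_{00}=0$, which is perfectly consistent). Hence the assertion that $\deg g=1$ ``immediately yields $n=m=1$'' is valid only when $\min(n,m)=1$, and the entire regime $n,m\ge 2$ --- not merely $n=m\ge 2$ with $b+c=0$ --- is left open and explicitly deferred to ``further examination.'' Since ruling out exactly these higher-degree solutions is the substantive content of the lemma, the proof is incomplete as it stands.

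To close the gap you must feed the structural information back into the full identity (\ref{Dist1}) rather than into the diagonal alone. For instance, comparing the coefficient of $y^{mn}$ on the two sides of $P(P(x,y),z)=P(P(x,z),P(y,z))$ gives $bc^{n}=cb^{m}$; comparing the complete $z$-expansion of the right-hand side against the fact that the left-hand side has $z$-degree only $m$ annihilates many more coefficients; and one can iterate your leading-coefficient argument on $p_{m-1}$ and $q_{n-1}$. Some such additional computation is needed before the case $n,m\ge 2$ is genuinely excluded.
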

\begin{proof}
First we fix a few notations.  For any polynomial $f(x,y)\in \mathbb{R}[x,y]$, denote by $f_x$ the highest power of $x$, by $f_y$ the highest power of $y$ and by $f_{xy}$ the degree of the polynomial $f(w,w)$.  For example, for $f(x,y)=2x^3 -x+x^4 y^5$, we have $f_x=3,\; f_y= 0$ and $f_{xy}=9.$

Now the equation $f(f(x,x),y)=f(f(x,y),f(x,y))$ gives $f_y = f_y  f_{xy}$.

If $f_y=0$ then in this case  $f(x,y)=P(x)$ is a polynomial in the variable $x$ only.

If $f_{xy}=1$ then in this case $f(x,y)=ax+by+c$.

Now the equation $f(f(x,y),z)=f(f(x,z),f(y,z))$ implies that
$$a(ax + by + c) + bz + c = a(ax + bz + c) + b(ay + bz + c) + c.$$

Then $bc=0$ or $ab+b^2=b$.  Thus either $b=0$ and $f(x,y)$ is a degree one polynomial in $x$ only, or $b\neq 0$ and thus $c=0$ and $a+b-1=0$ giving $f(x,y)= ax+(1-a)y$.  This concludes the proof.
\end{proof}

Note that if the binary operation $x\ast y=P(x)$ is a polynomial in the variable $x$ only, then the idempotency equation $x\ast x=x$ gives that $P(x)=x$. On the other hand, if $x\ast y=ax+(1-a)y$ is a quandle operation on the closed unit interval, since $0\ast y=0$, it follows that $a=1$. In conclusion, if a binary operation $\ast$ on $[0, 1]$ satisfies $x\ast y=P(x, y)\in\mathbb{R}[x,y]$, then $x\ast y=x$.

\subsection{Some other open questions}
\begin{question}
More generally, First, start by classifying the indecomposable compact connected topological quandles.
\end{question}

\begin{question}
It is well know result \cite{HofmannMostert} that the underlying topology of compact connected abelian group completely determines its structure as a topological group.
That is if $H_1$ and $H_2$ are compact connected abelian groups, then if  $H_1$ and $H_2$ are homeomorphic then $H_1$ and $H_2$ are isomorphic as topological groups.	Find an "analogous" result for topological quandles.  If two compact connected medial quandles are homeomorphic then are they isomorphic as topological quandles?
\end{question}

\begin{question}
For a given topological space $X$, is $X$ a topological rack/quandle with a nontrivial rack/quandle structure? In particular, is there any nontrivial quandle structure on closed orientable surface with genus greater than one?
\end{question}

\begin{question}
	In definition 4.3 of \cite{CS}, Clark and Saito studied a family of quandle structures on the $2$-sphere $S^2$ called \emph{spherical quandles}.  Precisely, for $0<\psi<2\pi$, and for $u, v \in S^2$, define a binary operation on $S^2$ by $u*_{\psi}v$ to be the rotation of $u$ about $v$ by the angle $\psi$. Up to isomorphism, are there any other different quandle structures on $S^2$?

\end{question}

\section*{Acknowledgements} The authors wish to thank Edwin Clark and Vilmos Totik for fruitful conversations. Zhiyun Cheng is supported by NSFC 11771042 and NSFC 11571038.

  \end{document}